\title{On a question of Matt Baker regarding the dollar game}
\author{Marine Cases}
\date{April 2023}
\newtheorem{theorem}{Theorem}
\newtheorem{lemma}{Lemma}
\newtheorem{definition}{Definition}
\begin{document}
\setcounter{page}{1}
\maketitle
\definecolor{paleblue}{RGB}{192, 216, 230}
\definecolor{palepurple}{RGB}{216, 191, 216}
\definecolor{red}{RGB}{200, 50, 50} 
\definecolor{blue}{RGB}{50, 50, 200} 
\begin{abstract}
 In \cite{baker2010}, Matt Baker wrote the following about the dollar game played on a graph : “The total number of borrowing moves required to win the game when playing the “borrowing binge strategy” is independent of which borrowing moves you do in which order! Note, however, that it is usually possible to win in fewer moves by employing lending moves in combination with borrowing moves. The optimal strategy when one uses both kinds of moves is not yet understood.”
 
In this article we give a lower bound on the minimum number $M_{\min }$ of such moves of an optimal algorithm in term of the number of moves $M_0$ of the borrowing binge strategy. Concretely, we have : $M_{\min }\geq \dfrac{M_0}{n-1}$ where $n$ is the number of vertices of the graph. This bound is tight.
\end{abstract}
\vspace{0,5cm}
\tableofcontents

\newpage

\section{Introduction}
In this note, we consider the dollar game on a connected graph $G$ with initial divisor $C$, and focus on the minimum number of borrowing and lending moves needed to win the game (i.e, reach an all non-negative divisor). If the game is winnable, then it can always be won using only borrowing moves. More precisely, any winnable game can be won by simply using the borrowing binge strategy : any time there are vertices in debt, pick one of them and do a borrowing move. Repeat until everyone is out of debt!\\
This strategy is optimal if you are constrained to only use borrowing moves, but not if you can do both lending and borrowing moves, as seen below: \\

\begin{adjustwidth}{-.1cm}{0cm}
\scalebox{0.8}{\begin{tikzpicture}[every node/.style={circle, draw, minimum size=.5cm},scale=.9]
    \begin{scope}[local bounding box=scope1, shift={(0,0)}]
        \node[red, ultra thick] (v11) at (0,0) {-1};
        \node (v12) at (2,2) {0};
        \node (v13) at (2,-2) {2};
        \node (v14) at (4,2) {0};
        \node[blue, ultra thick]  (v15) at (4,-2) {2};
        \node (v16) at (6,0) {3};
        \draw (v11) -- (v12) -- (v13) -- (v11);
        \draw (v11) -- (v14);
        \draw (v11) -- (v15);
        \draw (v12) -- (v13);
        \draw (v12) -- (v16);
        \draw (v14) -- (v13);
        \draw (v14) -- (v16);
        \draw (v15) -- (v16);
    \end{scope}

    \begin{scope}[local bounding box=scope2, shift={(9,0)}]
        \node (v21) at (0,0) {3};
        \node (v22) at (2,2) {-1};
        \node (v23) at (2,-2) {1};
        \node[red, ultra thick] (v24) at (4,2) {-1};
        \node (v25) at (4,-2) {1};
        \node (v26) at (6,0) {3};
        \draw (v21) -- (v22) -- (v23) -- (v21);
        \draw (v21) -- (v24);
        \draw (v21) -- (v25);
        \draw (v22) -- (v23);
        \draw (v22) -- (v26);
        \draw (v24) -- (v23);
        \draw (v24) -- (v26);
        \draw (v25) -- (v26);
    \end{scope}

    \draw[->, ultra thick,red,text=black] (scope1.east) ++ (0.3,0)-- ([xshift=-0.3cm]scope2.west) node[midway, rectangle, thin, above=.4cm] {borrow on {\color{red} red}};

    \begin{scope}[local bounding box=scope3, shift={(0,-6)}]
        \node (v31) at (0,0) {2};
        \node[red, ultra thick] (v32) at (2,2) {-1};
        \node (v33) at (2,-2) {0};
        \node (v34) at (4,2) {2};
        \node (v35) at (4,-2) {1};
        \node (v36) at (6,0) {2};
        \draw (v31) -- (v32) -- (v33) -- (v31);
        \draw (v31) -- (v34);
        \draw (v31) -- (v35);
        \draw (v32) -- (v33);
        \draw (v32) -- (v36);
        \draw (v34) -- (v33);
        \draw (v34) -- (v36);
        \draw (v35) -- (v36);
    \end{scope}
\draw[->, ultra thick,red,text=black] (scope2.south) ++ (0,-.1)-- ([xshift=-0.7cm,yshift=1.4cm,text=black]scope3.east) node[midway, rectangle, thin, left=1.1cm] {borrow on {\color{red} red}};
    \begin{scope}[local bounding box=scope4, shift={(9,-6)}]
        \node (v41) at (0,0) {1};
        \node (v42) at (2,2) {2};
        \node[red, ultra thick] (v43) at (2,-2) {-1};
        \node (v44) at (4,2) {2};
        \node (v45) at (4,-2) {1};
        \node (v46) at (6,0) {1};
        \draw (v41) -- (v42) -- (v43) -- (v41);
        \draw (v41) -- (v44);
        \draw (v41) -- (v45);
        \draw (v42) -- (v43);
        \draw (v42) -- (v46);
        \draw (v44) -- (v43);
        \draw (v44) -- (v46);
        \draw (v45) -- (v46);
    \end{scope}

     \draw[->, ultra thick,red,text=black] (scope3.east) ++ (0.3,0)-- ([xshift=-0.3cm]scope4.west) node[midway, rectangle, thin, above=.4cm] {borrow on {\color{red} red}};
    \begin{scope}[local bounding box=scope5, shift={(9,-12)}]
        \node (v51) at (0,0) {0};
        \node (v52) at (2,2) {1};
        \node (v53) at (2,-2) {2};
        \node (v54) at (4,2) {1};
        \node (v55) at (4,-2) {1};
        \node (v56) at (6,0) {1};
        \draw (v51) -- (v52) -- (v53) -- (v51);
        \draw (v51) -- (v54);
        \draw (v51) -- (v55);
        \draw (v52) -- (v53);
        \draw (v52) -- (v56);
        \draw (v54) -- (v53);
        \draw (v54) -- (v56);
        \draw (v55) -- (v56);
    \end{scope}

   \draw[->, ultra thick,red,text=black] (scope4.south) ++ (0,-.1)-- ([yshift=.3cm]scope5.north) node[midway, rectangle, thin, left=.4cm] {borrow on {\color{red} red}};

    \begin{scope}[local bounding box=scope2, shift={(0,-12)}]
        \node (v21) at (0,0) {0};
        \node (v22) at (2,2) {0};
        \node (v23) at (2,-2) {2};
        \node (v24) at (4,2) {0};
        \node (v25) at (4,-2) {0};
        \node (v26) at (6,0) {4};
        \draw (v21) -- (v22) -- (v23) -- (v21);
        \draw (v21) -- (v24);
        \draw (v21) -- (v25);
        \draw (v22) -- (v23);
        \draw (v22) -- (v26);
        \draw (v24) -- (v23);
        \draw (v24) -- (v26);
        \draw (v25) -- (v26);
    \end{scope}
       \draw[->, ultra thick,blue] ([xshift=1.2cm, yshift=-1.4cm]scope1.west)  to[out=180, in=180, bend right]([xshift=1.2cm,yshift=1.4cm]scope2.west);
       \node[rectangle, align=left,blue,,text=black, thin] at (1.5,-3) {lend on {\color{blue} blue}};
        \node[rectangle, thin,] at (3,-14.85) {Effective divisor reached in $M_{\min }={\color{blue}1}$ move};
         \node[rectangle, thin,align=center] at (12,-15.1) {Effective divisor reached in $M_0={\color{red} 4}$ moves\\ Binge Borrowing algorithm};
\end{tikzpicture}}
\end{adjustwidth}
In this article we will prove a bound $M_{\min }\geq \dfrac{M_0}{n-1}$ on how far from optimal the Borrowing Binge strategy can be, where n is the number of vertices of G.

We start by reinterpreting the question in terms of the chip firing game through a well-known bijection $v_{0}\rightarrow K-v_{0}$ (See \cite{book},
p 40), where $K=(d_i-1)_i$ is the maximal stable divisor.

In this context, the borrowing binge strategy becomes the standard greedy chip-firing game : any time there are vertices with at least as many chips as their degree, pick one of them and do a lending move; repeat until the divisor is stable and no move can be made.
The effective divisor then becomes a stable divisor.
Then the question becomes  what is the minimal number of lending and borrowing moves to reach a stable divisor.

The proof of our main theorem $M_{\min }\geq \dfrac{M_0}{n-1}$, with $M_{\min }$ the minimum number of moves of an optimal algorithm and $M_0$ the number of moves  of the borrowing binge strategy, relies on two lemmas : 

\begin{enumerate}
    \item The principle of least action which says that the greedy chip firing game strategy is optimal to reach a stable divisor if we restrict to only lending moves.
    \item  A lemma which tells us by performing only lending moves, we can at most cut the length by a multiplicative factor $\dfrac{1}{n-1}$ if we allow borrowing moves as well.
\end{enumerate}
Combining the two lemmas give the theorem.

\subsection{Definitions}

First, let us introduce some basic definitions.
We will consider only simple connected graphs throughout this article.
\begin{definition}
\begin{itemize}
    \item The Laplacian matrix of G, denoted $\Delta$, is a matrix that captures the structure of a graph. Its elements are given by:
    \[
    \Delta_{ij} =
    \begin{cases}
      d_i & \text{if } i=j. \\
      -1 & \text{if } i\neq j \text{ and vertices } i \text{ and } j \text{ are adjacent.} \\
      0 & \text{otherwise.}
    \end{cases}
    \]
    where $d_i$ is the degree of the vertex $i$.
    \item A \textbf{divisor} is a vector in $\mathbf{Z}^n$ that represents the number of chips on every vertex of the graph.
    \item Given two divisors $C$ and $C'$, and $v \in \mathbf{Z}^n$, the notation $C \xrightarrow{v} C'$ means that $C'=C-\Delta \cdot v$.
    \item Let $C$ be a divisor:
    \begin{itemize}
        \item A \textbf{lending move} on vertex $i$ corresponds to $C\xrightarrow{e_i}C'$ and means that the vertex $i$ lends one chip to each of its neighbors.
        \item A \textbf{borrowing move} on vertex $i$ corresponds to $C\xrightarrow{-e_i}C'$ and means that the vertex $i$ borrows one chip from each of its neighbors.
    \end{itemize}
    \item An \textbf{effective divisor} $C$ is a divisor with $C\geq 0$.
    \item A \textbf{stable divisor} $C$ is a divisor with $C\leq K=(d_i-1)_i$, meaning that one cannot perform a lending move without becoming indebted, i.e $(C')_i<0$ for some i .
    \item A \textbf{closest stable divisor} $C^*$ to a given divisor $C$ is defined as a stable divisor such that the number of moves (either borrowing or lending) required to reach any other stable divisor from $C$ is no less than the number of moves required to reach $C^*$ from $C$. 
    \item The \textbf{distance} between two divisors is the minimal number of moves required to reach one from the other.  
\end{itemize}
\end{definition}

Let $C$ be a divisor and consider $C'=K-C$. Then a borrowing move if a vertex $i$ has a non positive number of chips in $C$ corresponds to a lending move if a vertex $i$ has at least $ d_{i}$ chips.

We have then the following equivalences:
\begin{center}
\begin{tabular}{|p{5.8cm}|p{5.8cm}|}
\hline\\
\text{Binge strategy for dollar game} & \text{Chip firing game}\\
\hline
\text{Effective divisor} & \text{Stable divisor}\\
\hline
Borrow on negative vertices & Lend on vertices with at least as many chips as the degree\\
\hline
 Borrowing binge strategy to reach an effective divisor & Standard greedy chip firing game (without a sink) to reach a stable divisor\\ 
\hline
Riemann-Roch formula (see \cite{baker_norine}): number of chips that can be removed before no longer having an effective divisor in the class  (or, equivalently, the borrowing binge is no longer terminating) & Riemann-Roch formula: number of chips that can be added before no longer having stable divisor in the class (or, equivalently, the game is no longer terminating)\\
\hline
\end{tabular}
\end{center}

\section{Main Theorem}

\begin{theorem}

Let $C\xrightarrow{v_0} X_{0}$, where $v_0$ is the firing sequence found with the greedy algorithm. 
If $\left| v_{0}\right|_{1} = M_0$
then the closest stable divisor must be at distance at least $\dfrac{M_0}{n-1}$.
\end{theorem}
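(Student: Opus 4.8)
The plan is to relate the optimal signed firing vector to a nonnegative one by exploiting that $\mathbf{1}=(1,\dots,1)$ lies in the kernel of $\Delta$, and then to invoke the principle of least action. Write the target of an optimal strategy as $C-\Delta v^{*}=Y$ with $Y$ stable, where $v^{*}\in\mathbf{Z}^n$ is chosen as the $\ell^1$-minimal representative of its firing vector modulo $\ker\Delta$. Since going into debt is freely allowed, there is no feasibility obstruction: any net firing vector $v$ is realizable in exactly $\left|v\right|_1$ moves, and no reordering or cancellation can do better, so $M_{\min}=\left|v^{*}\right|_1$. (For a connected graph $\ker\Delta\cap\mathbf{Z}^n=\mathbf{Z}\mathbf{1}$, which is what makes the shift below legitimate.)

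First I would normalize $v^{*}$ into a lending-only strategy. Let $m=\min_i v^{*}_i$ and set $w=v^{*}-m\,\mathbf{1}$. Because $\Delta\mathbf{1}=0$ we get $C-\Delta w=C-\Delta v^{*}=Y$, so $w$ reaches the same stable divisor, while $w\geq 0$ means $w$ uses only lending moves. By the principle of least action (Lemma 1), the greedy strategy minimizes the number of moves among all lending-only strategies reaching a stable divisor, hence
\[ \left|w\right|_1 \;\geq\; M_0 . \]

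The heart of the argument is then the multiplicative bound of Lemma 2. Splitting $v^{*}$ into positive and negative parts, write $S_+=\sum_{v^{*}_i>0}v^{*}_i$ and $S_-=\sum_{v^{*}_i<0}(-v^{*}_i)$, so that $\left|v^{*}\right|_1=S_++S_-$ and, since the largest single negative entry cannot exceed the total negative mass, $-m\leq S_-$. Using $w\geq 0$ and $n\geq 2$,
\[ \left|w\right|_1=\sum_i\bigl(v^{*}_i-m\bigr)=(S_+-S_-)-n m\;\leq\;S_++(n-1)S_-\;\leq\;(n-1)(S_++S_-)=(n-1)\,\left|v^{*}\right|_1 . \]
Combining the two displays yields $M_0\leq\left|w\right|_1\leq(n-1)\,\left|v^{*}\right|_1=(n-1)\,M_{\min}$, i.e. $M_{\min}\geq M_0/(n-1)$.

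I expect the main obstacle to be Lemma 2 with its \emph{sharp} constant, rather than the combination itself: the key insight is that the kernel direction $\mathbf{1}$ lets one trade borrowing for lending without changing the reached divisor, and one must check that the extremal case, where all borrowing is concentrated on a single vertex (so $-m=S_-$ and $S_+=0$), is exactly what produces the factor $n-1$, giving tightness. A secondary point, on which Lemma 1 rests, is the abelian/uniqueness property of the sink-free chip-firing game, namely that a stabilizable configuration reaches a unique stable divisor via a componentwise-minimal firing vector; I would isolate and establish that first, since it is what guarantees the inequality $\left|w\right|_1\geq M_0$ above for the shifted vector $w$.
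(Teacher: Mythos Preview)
Your proof is correct and follows essentially the same route as the paper: shift the optimal signed firing vector by a multiple of $\mathbf{1}$ into a nonnegative one $w$, invoke least action to get $|w|_1\geq M_0$, and then bound $|w|_1\leq (n-1)\,|v^{*}|_1$. The only difference is that you establish this last inequality directly via the $S_+,S_-$ decomposition, whereas the paper's Lemma~2 first locates the $\ell^1$-minimizer at the median shift and then bounds its norm from below; your version is a bit more streamlined but equivalent.
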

\noindent The two examples afterwards prove that the bound in Theorem 1 is tight. 
\newline

The following is a classic lemma from chip-firing game theory :
\begin{lemma}
(Least action principle.)
Let $C\xrightarrow{v_0} X_{0}$ be the sequence from the greedy algorithm of the classical chip-firing game. Then, if $C\xrightarrow{v_1} X_{1}$ is another sequence to another stable configuration, we have that if $v_{1}\geq 0$, then $v_{1}\geq v_{0}$. (For a proof of this lemma, see \cite{book}, pp.39-40)
\end{lemma}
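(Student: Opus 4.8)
The plan is to prove $v_1 \geq v_0$ componentwise by running the greedy firing one single move at a time and comparing the running firing count against $v_1$ at each step. First I would unpack the greedy sequence $C \xrightarrow{v_0} X_0$ into its individual legal firings: write the greedy run as an ordered list of vertices $s_1, s_2, \ldots, s_{M_0}$, where $s_k$ is the vertex fired at step $k$, and let $v_0^{(k)}$ be the partial firing vector recording how many times each vertex has fired after the first $k$ steps, so that $v_0^{(0)} = 0$ and $v_0^{(M_0)} = v_0$. The intermediate configuration is $Y_k = C - \Delta v_0^{(k)}$, and legality of the $(k+1)$-st firing on $w := s_{k+1}$ means precisely that $(Y_k)_w \geq d_w$, since the greedy algorithm only fires vertices that hold at least their degree in chips.

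The core claim I would establish by induction on $k$ is that $v_0^{(k)} \leq v_1$ for every $k$; taking $k = M_0$ then yields the lemma. The base case $k = 0$ is immediate, since $v_1 \geq 0 = v_0^{(0)}$ by hypothesis. For the inductive step, assuming $v_0^{(k)} \leq v_1$, it suffices to show that the vertex $w$ fired next has not yet exhausted its budget, i.e. $(v_0^{(k)})_w < (v_1)_w$, because this upgrades the inequality after incrementing the $w$-th coordinate and gives $v_0^{(k+1)} \leq v_1$.

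The main obstacle, and the \emph{heart} of the argument, is ruling out the equality $(v_0^{(k)})_w = (v_1)_w$. Here I would set $\delta := v_0^{(k)} - v_1$, so that $\delta \leq 0$ by the inductive hypothesis and $\delta_w = 0$ under the assumed equality. The two target configurations are related by $X_1 - Y_k = \Delta \delta$, and evaluating the $w$-th coordinate of the Laplacian image gives $(\Delta \delta)_w = d_w \delta_w - \sum_{j \sim w} \delta_j = -\sum_{j \sim w} \delta_j \geq 0$, because $\delta_w = 0$ and every neighbor term satisfies $\delta_j \leq 0$. Consequently $(X_1)_w = (Y_k)_w + (\Delta \delta)_w \geq (Y_k)_w \geq d_w$, which contradicts the stability of $X_1$: stable means $X_1 \leq K$, hence $(X_1)_w \leq d_w - 1$. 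This contradiction forces the strict inequality and closes the induction, proving $v_0 \leq v_1$. I expect the sign bookkeeping in this Laplacian computation to be the only delicate point; everything else follows routinely once the "fire one move at a time and compare against $v_1$" framework is in place.
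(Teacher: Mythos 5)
Your proof is correct and is essentially the same argument the paper delegates to \cite{book}, pp.~39--40: an induction on the prefixes of the greedy firing sequence, with the key step being that equality $(v_0^{(k)})_w=(v_1)_w$ at a legally fireable vertex $w$ forces $(X_1)_w=(Y_k)_w+(\Delta\delta)_w\geq d_w$, contradicting stability $(X_1)_w\leq d_w-1$; your sign bookkeeping matches the paper's Laplacian convention $C'=C-\Delta v$. Worth noting: your argument never uses that $v_1$ comes from a legal firing sequence, only that $v_1\geq 0$ and $C-\Delta v_1$ is stable, and this slightly stronger form is exactly what the paper's proof of Theorem~1 requires, since there $v_1$ is produced by shifting an optimal (possibly borrowing) sequence by a multiple of $\mathbf{1}$ and so need not be legally playable.
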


\begin{lemma}
Let $v\geq 0$ be a vector with at least one zero coordinate such that $C\xrightarrow{v} X$. Then the shortest path $v'$ from $C$ to $X$ satisfies $\left| v'\right|_{1}\geq \dfrac{\left| v\right|_{1}}{n-1}$.
\end{lemma}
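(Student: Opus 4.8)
The plan is to first reduce the statement to an optimization over a single integer parameter, and then bound that optimization directly using the zero-coordinate hypothesis. Any firing vector $v'$ taking $C$ to $X$ must satisfy $C-\Delta v' = X = C-\Delta v$, hence $\Delta(v'-v)=0$. Since $G$ is connected, $\ker\Delta = \mathbf{R}\cdot\mathbf{1}$ with $\mathbf{1}=(1,\dots,1)$, and the only integer vectors in this kernel are the multiples $k\mathbf{1}$, $k\in\mathbf{Z}$. Thus every admissible $v'$ has the form $v' = v - k\mathbf{1}$. Because a net firing vector $w$ can be realized by exactly $|w|_1=\sum_i|w_i|$ moves (performing $|w_i|$ lending or borrowing moves at vertex $i$ according to the sign of $w_i$) and by no fewer, the length of the shortest path from $C$ to $X$ equals $\min_{k\in\mathbf{Z}}\sum_i|v_i-k|$. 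So it suffices to show $\sum_i|v_i-k|\ge \frac{|v|_1}{n-1}$ for every integer $k$; I would in fact prove it for every real $k$.

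\textbf{The core inequality.} Write $S=|v|_1=\sum_i v_i$ (using $v\ge 0$) and fix a coordinate $j$ with $v_j=0$. If $k\le 0$ then each $v_i-k\ge 0$, so $\sum_i|v_i-k| = S-nk \ge S \ge \frac{S}{n-1}$. If $k\ge 0$, I would isolate the zero coordinate: $\sum_i|v_i-k| = k + \sum_{i\ne j}|v_i-k|$, and the triangle inequality gives $\sum_{i\ne j}|v_i-k|\ge \bigl|\sum_{i\ne j}(v_i-k)\bigr| = |S-(n-1)k|$, since $\sum_{i\ne j}v_i=S$. Hence $\sum_i|v_i-k|\ge k+|S-(n-1)k|$. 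The right-hand side is piecewise linear in $k$: it decreases on $[0,S/(n-1)]$ (equal to $S-(n-2)k$) and increases on $[S/(n-1),\infty)$ (equal to $nk-S$), so its minimum over $k\ge 0$ is attained at $k=S/(n-1)$ and equals $\frac{S}{n-1}$. Combining the two ranges of $k$ yields the claimed bound, and the shortest path length is therefore at least $\frac{|v|_1}{n-1}$.

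\textbf{Where the difficulty lies.} The real content is recognizing that the hypothesis ``at least one zero coordinate'' is precisely what forces the bound. Without it the $\ell^1$-median shift can collapse $\sum_i|v_i-k|$ far below $S/(n-1)$; for instance, if all coordinates are equal the optimal shift drives the sum to $0$. The zero coordinate is exactly what guarantees a mandatory contribution of size $k$ whenever we shift upward, and this $+k$ term is what produces the lower bound in the core inequality. I also note that the estimate is sharp: for $v=(0,t,\dots,t)$ one has $S=(n-1)t$ and the optimal shift $k=t$ gives $\sum_i|v_i-k|=t=S/(n-1)$, matching the tightness asserted for the main theorem. Finally, connectedness of $G$ enters essentially, as it is what makes $\ker\Delta$ one-dimensional and thereby pins down the family $v'=v-k\mathbf{1}$ of competing firing vectors.
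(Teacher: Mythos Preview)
Your proof is correct and takes a genuinely different route from the paper's. Both arguments start with the same reduction: since $\ker\Delta=\mathbf{Z}\cdot\mathbf{1}$, the shortest path length equals $\min_{k\in\mathbf{Z}}\sum_i|v_i-k|$. From there the paper locates the actual minimizer by a median argument (showing that at the minimum the numbers of strictly positive and strictly negative coordinates are each at most $n/2$, handling the parity of $n$, sorting the $v_i$, and subtracting the middle entry), and then bounds the resulting $L^1$-norm below by the maximum coordinate $x_n$, hence by the average $\frac{1}{n-1}\sum_i x_i$. You instead bypass the search for the minimizer entirely: you isolate the zero coordinate to get a mandatory $+k$ term and collapse the remaining $n-1$ terms via the triangle inequality into $|S-(n-1)k|$, giving a convex lower envelope $k+|S-(n-1)k|$ whose minimum is immediately $S/(n-1)$. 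Your approach is shorter and avoids the even/odd case split; the paper's approach, on the other hand, identifies the optimal shift explicitly, which is what feeds into its later discussion of the examples and of the ``renormalized'' path.
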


\begin{proof}
In this proof, $k/2$ mean the integer $\lfloor k/2\rfloor$, for every integer $k$.

Since $\mathbf{1}=\begin{pmatrix}
1 \\
\vdots  \\
1
\end{pmatrix}$ is a basis for $\text{ker }\Delta$, we are looking for $v_{min}=v + k\cdot \mathbf{1}, k \in \mathbf{Z}$ with minimal $L^1$-norm.

First, note that if $v'$ has $m$ positive coordinates, $l$ zero coordinates, and $r=n-m-l$ negative coordinates, then 

$$\begin{aligned}\left| v'-1\cdot \mathbf{1}\right| _{1}&=\left| v'\right|_{1}+ r+l -m&\\
&=\left| v'\right| _{1}+n-2m &\\
 &<\left| v'\right| _{1}& \text{if } m>\dfrac{n}{2}\end{aligned}
$$

and

$$\begin{aligned}\left| v'+1\cdot \mathbf{1}\right| _{1}&=\left| v'\right|_{1}-r+l +m&\\
&=\left| v'\right|_{1}+n-2r &\\
 &<\left| v'\right|_{1}& \text{if }  r>\dfrac{n}{2}. \end{aligned}
$$
Thus the minimum can be obtained only when the number of positive and negative components are both at most $n/2$. Therefore, we must remove $k\cdot \mathbf{1}$ from $v$, with $k$ chosen so that the new vector $v_{min}=v-k\cdot \mathbf{1}$ has exactly $n/2$ positive or zero components and $n/2$ negative or zero components. In this case, the $L^1$ norm of this vector will be minimal. Concretely :
\begin{itemize}
\item For $n$ even, the vector with $\frac{n}{2}-1$ positive components, $\frac{n}{2}$ negative components and 1 zero component is a minimum.
\item For $n$ odd, the minimum is unique. The vector with $\frac{n}{2}$ positive and negative components, and 1 zero component is the minimun.
\end{itemize}

Both cases can thus be dealt with simultaneously by writing: $\frac{n-1}{2}$ positive components, $\frac{n}{2}$ negative components and 1 zero component.

Let us now compute this minimal norm. 
We want to show that $\left| v_{\min }\right| _{1}\geq \dfrac{\left| v\right| _{1}}{n-1}$.

Without loss of generality, let us assume that $v$ satifies $$v=\begin{pmatrix}
x_{1} \\
\vdots\\
x_{n}
\end{pmatrix} \quad 0=x_{1}\leq \ldots \leq x_{n}.$$ (We sort its components for simplicity.)

Then $k=x_{\frac{n-1}{2}}$  and 
$$v_{min}=\begin{pmatrix}
0 \\
x_{2} \\
\vdots  \\
x_{n}
\end{pmatrix}-x_{\frac{n-1}{2}}\cdot \begin{pmatrix}
1 \\
\vdots  \\
1
\end{pmatrix}=\begin{pmatrix}
-x_{\frac{n-1}{2}} \\
x_{2}-x_{\frac{n-1}{2}} \\
\vdots  \\
x_{n}-x_{\frac{n-1}{2}}
\end{pmatrix}.
$$
Thus
\begin{align*}
    \left|\begin{pmatrix}
-x_{\frac{n-1}{2}} \\
x_{2}-x_{\frac{n-1}{2}} \\
\vdots  \\
x_{n}-x_{\frac{n-1}{2}}
\end{pmatrix}\right| _{1}&= x_{\frac{n-1}{2}} +\sum^{\frac{n-1}{2}-1}_{i=2}x_{\frac{n-1}{2}} -x_{i}+\sum ^{n}_{i=\frac{n-1}{2}+1}x_{i}-x_{\frac{n}{2}}\\
&=x_{\frac{n-1}{2}} +\underbrace{\sum^{\frac{n-1}{2}-1}_{i=2}x_{\frac{n-1}{2}} -x_{i}}_{\geq 0}+\underbrace{\sum ^{n-1}_{i=\frac{n-1}{2}+1}x_{i}-x_{\frac{n-1}{2}}}_{\geq 0}+(x_{n-1}-x_{\frac{n-1}{2}})\\
&\geq x_{n} \\
&\geq \dfrac{\sum ^{n}_{i=2}x_{i}}{n-1}=\dfrac{\sum ^{n-1}_{i=1}x_{i}}{n-1}\text{ since $x_1=0$}\\
&=\dfrac{\left| v\right| _{1}}{n-1}.
\end{align*}

\end{proof}

Using these two lemmas, we can show our theorem: 
\begin{proof}
    
Let $C\xrightarrow{v_0} X_{0}$ be the sequence from the greedy algorithm. We want to find a closest stable divisor. Let $v_{1}$ be a sequence from $C$ to a closest stable divisor. Since $\begin{pmatrix}
1 \\
\vdots  \\
1
\end{pmatrix}$ is a basis for $\text{ker }\Delta$, we can always choose $v_{1}\geq 0$ with one zero coordinate. By Lemma 2, we have $v_{1}\geq v_{0}\geq 0$.

Now, by Lemma 1, the shortest path from C to the stable divisor that $v_{1}$ reaches is $v_{1}^{'}$, with $\left| v_{1}'\right|_{1}\geq \dfrac{\left| v_{1}\right|_{1}}{n-1}$.

Moreover, since $v_{1}\geq v_{0} \geq 0$ , we have $M_{min}=\left| v_{1}'\right|_{1}\geq \dfrac{\left| v_{1}\right|_{1}}{n-1}\geq \dfrac{\left| v_{0}\right|_{1}}{n-1}=\dfrac{M_0}{n-1}$.
\end{proof}

\subsection{Examples where the bound in Theorem 1 is optimal }
In this section, we will construct two examples to show that the bound we obtained is tight.\\
\textbf{Example 1 :}

\begin{tikzpicture}[scale=1.5]
\begin{scope}[shift={(0,0)}] 
    \node[circle,draw, fill=palepurple](Z1) at (0,0) {$-nk$};

    \foreach \i in {1,3,4,5} {
        \node[circle,draw, fill=paleblue](A\i) at ({360/5*(\i-1)}:1) {$k$};
        \draw (Z1)--(A\i);
    }

    \node[scale=2,rotate=-18] at ({360/5*(1)}:.6) {$\cdots$};
\end{scope}

\begin{scope}[shift={(5,0)}] 
    \node[circle,draw, fill=palepurple](Z2) at (0,0) {$-k$};

    \foreach \i in {1,3,4,5} {
        \node[circle,draw](B\i) at ({360/5*(\i-1)}:1) {$0$};
        \draw (Z2)--(B\i);
    }

    \node[scale=2,rotate=-18] at ({360/5*(1)}:.6) {$\cdots$};
\end{scope}

\draw[->, thick] (2,0) node[left] {$C$} -- (3.1,0) node[midway,above] {$\Delta(v_0)$} node[right] {$X_0$};

\end{tikzpicture}\\[0,2cm]

Let $C\xrightarrow{v_0} X_{0}$ with 
$v_{0}=\begin{pmatrix}
k \\
\vdots \\
k \\
0 \\
k \\
\vdots \\
k
\end{pmatrix}.$
We can choose $v_{min}=v_{0}-k \cdot\begin{pmatrix}
1 \\
1 \\
\vdots  \\
1 \\
1 \\
1
\end{pmatrix}
=\begin{pmatrix}
0 \\
\vdots \\
0 \\
-k \\
0 \\
\vdots \\
0 \\
\end{pmatrix}.$

\noindent Note here that we didn’t even need to find another stable divisor : the same divisor with another (shorter but equivalent) vector of firings give us the best possible result.\\

\textbf{Example 2: }
The preceding example offers an optimal, linear (in $n$) improvement. One might think that this is the only way to achieve the optimal bound, through an equivalent and much shorter path to the same divisor, via the following algorithm : compute the greedy path, identify the vector with the smallest norm in $\mathbf{coker}(\Delta)$, and use this ‘renormalization’ to obtain a linear improvement. This suggests that, once we take into account the ‘renormalized’ path to the greedy divisor, we might no longer obtain a linear improvement by finding a path to a closer stable divisor. However, our second example demonstrates that this is not the case. More precisely, we prove that we can achieve an improvement by a factor of $n/2$ by reaching a stable divisor entirely different from the greedy one, and this factor of $n/2$ holds even when we account for the ‘renormalized’ path to the greedy divisor.\\

\noindent $n$ will denote an even integer in the following.\\

\definecolor{paleblue}{RGB}{192, 216, 230}
\definecolor{palepurple}{RGB}{216, 191, 216}
\definecolor{myblue}{RGB}{0,0,128}
\begin{adjustwidth}{-2.2cm}{-2.2cm}
\begin{tikzpicture}[scale=1.5]

\begin{scope}[shift={(0,0)}]
    \node[circle,draw, fill=palepurple,inner sep=4pt](Z1) at (0,0) {$-nk$};

    \foreach \i in {1,...,3} {
        \node[circle,draw, fill=paleblue](A\i) at ({360/3*(\i-1)}:1) {$k$};
        \draw (Z1)--(A\i);
    }
    
    \node[circle,draw, fill=paleblue](A4) at ({360/3*(2)+60}:1) {$k$};
        \draw (Z1)--(A4);
     \node[rotate=-35,scale=2] at ({360/3-60}:.7) {$\cdots$}; 
    \node[circle,draw, fill=paleblue](A5) at ({360/3*3}:1) {$k$};
    \draw (Z1)--(A4);
\end{scope}

\begin{scope}[shift={(-3,0)}]
    \foreach \i in {1,...,6} {
        \node[circle,draw](B\i) at ({70+360/6*(\i-1)}:1) {$0$};
    }
  \foreach \i in {1,...,5} { 
    \pgfmathsetmacro{\startval}{int(\i+1)}
    \foreach \j in {\startval,...,6} {
        \draw (B\i) -- (B\j);
    }
}
    
    \end{scope}
\draw[very thick,myblue] (Z1) .. controls (-1.3049, -0.8749) and (-2.86, -1.20) .. (B4);
\node[circle,draw, fill=palepurple,inner sep=4pt] at (0,0) {$-nk$};

\begin{scope}[shift={(-3,0)}]
\node[circle,draw,color=white,fill=white,thick] at ({70+360/6*(4-1)}:1) {$-nkkk$};
\node[rotate=-23,scale=2] at ({70+360/6*(4-1)}:.8) {$\cdots$};

\end{scope}

\foreach \i in {1,2,3,5,6} {
    \draw[very thick,myblue] (Z1)-- (B\i);
}
\node at (-3,-1.4) {Complete graph $K_\frac{n}{2}$};
\node at (0,-1.4) {$\frac{n}{2}$ blue vertices};
  \draw[->, very thick]
    (-1.5,-1.7) -- (-4,-2.5);
    \node[fill=white] at (-3,-2.1) {Fire every blue vertex $k$ times };

      \draw[->, very thick]
    (-1,-1.7) -- (1.5,-2.5);
    \node[fill=white] at (.5,-2.1) {Fire the pink vertex $k$ times };

\begin{scope}[shift={(-2.9,-3.75)}]
\begin{scope}[shift={(0,0)}]
    \node[circle,draw, ](Z1) at (0,0) {$\frac{n}{2}k$};

    \foreach \i in {1,...,3} {
        \node[circle,draw](A\i) at ({360/3*(\i-1)}:1) {$0$};
        \draw (Z1)--(A\i);
    }
    
    \node[circle,draw, ](A4) at ({360/3*(2)+60}:1) {$0$};
        \draw (Z1)--(A4);
     \node[rotate=-35,scale=2] at ({360/3-60}:.7) {$\cdots$}; 
    \node[circle,draw](A5) at ({360/3*3}:1) {0};
    \draw (Z1)--(A4);
\end{scope}

\begin{scope}[shift={(-3,0)}]
    \foreach \i in {1,...,6} {
        \node[circle,draw](B\i) at ({70+360/6*(\i-1)}:1) {$0$};
    }
    \foreach \i in {1,...,5} { 
    \pgfmathsetmacro{\startval}{int(\i+1)}
    \foreach \j in {\startval,...,6} {
        \draw (B\i) -- (B\j);
    }
}

    \end{scope}
\draw[very thick,myblue] (Z1) .. controls (-1.3049, -0.8749) and (-2.86, -1.20) .. (B4);

\begin{scope}[shift={(-3,0)}]
\node[circle,draw,color=white,fill=white,thick] at ({70+360/6*(4-1)}:1) {$-nkkk$};
\node[rotate=-23,scale=2] at ({70+360/6*(4-1)}:.8) {$\cdots$};

\end{scope}

\foreach \i in {1,2,3,5,6} {
    \draw[very thick,myblue] (Z1)-- (B\i);
}
    \node[circle,draw,inner sep=4pt,fill=white] at (0,0) {$-\frac{n}{2}k$};
\node[circle,draw,inner sep=4pt,fill=palepurple] at (0,0) {$-\frac{n}{2}k$};
\end{scope}

\begin{scope}[shift={(2.9,-3.75)}]
\begin{scope}[shift={(0,0)}]
    \node[circle,draw, ](Z1) at (0,0) {$0$};

    \foreach \i in {1,...,3} {
        \node[circle,draw](A\i) at ({360/3*(\i-1)}:1) {$0$};
        \draw (Z1)--(A\i);
    }
    
    \node[circle,draw, ](A4) at ({360/3*(2)+60}:1) {$0$};
        \draw (Z1)--(A4);
     \node[rotate=-35,scale=2] at ({360/3-60}:.7) {$\cdots$}; 
    \node[circle,draw](A5) at ({360/3*3}:1) {0};
    \draw (Z1)--(A4);
\end{scope}

\begin{scope}[shift={(-3,0)}]
    \foreach \i in {1,...,6} {
        \node[circle,draw,fill=palepurple,inner sep=1.2pt](B\i) at ({70+360/6*(\i-1)}:1) {${\scriptstyle-} k$};
    }
  \foreach \i in {1,...,5} { 
    \pgfmathsetmacro{\startval}{int(\i+1)}
    \foreach \j in {\startval,...,6} {
        \draw (B\i) -- (B\j);
    }
}
    
    \end{scope}
\draw[very thick,myblue] (Z1) .. controls (-1.3049, -0.8749) and (-2.86, -1.20) .. (B4);
\node[circle,draw,inner sep=4pt,fill=white] at (0,0) {0};
\begin{scope}[shift={(-3,0)}]
\node[circle,draw,color=white,fill=white,thick] at ({70+360/6*(4-1)}:1) {$-nkkk$};
\node[rotate=-23,scale=2] at ({70+360/6*(4-1)}:.8) {$\cdots$};

\end{scope}

\foreach \i in {1,2,3,5,6} {
    \draw[very thick,myblue] (Z1)-- (B\i);
}
    
\end{scope}

\end{tikzpicture}
\end{adjustwidth}

\noindent Let $C \xrightarrow{\Delta{(v_0)}} X_0$ with 
$v_{0}=\begin{pmatrix}
k \\
\vdots \\
k \\
0 \\
0 \\
\vdots \\
0
\end{pmatrix}.$
Here $v_0$ is the best possible among all vectors in $v_0+ker \Delta$, since there are at most $n/2$ positive and negative components (cf lemma 2).
Thus, to improve, we must find a sequence to another stable divisor. Note that the vector $v_{min}
=\begin{pmatrix}
0 \\
\vdots \\
0 \\
-k \\
0 \\
\vdots \\
0 \\
\end{pmatrix}$
satisfies $C \xrightarrow{\Delta v_{\text{min}}} X_{1}$
and we have $\left| V_{\min }\right| _{1}=\dfrac{\left| v_{0}\right| _{1}}{\frac{n}{2}}$.\\
\section{Acknowledgements}
I would like to thank Matt Baker for his advice (both on a mathematical and a non-mathematical level) and for suggesting this problem to me.

\bibliographystyle{plainurl}  

\bibliography{Main}

\begin{thebibliography}{1}

\bibitem{baker2010}
Matthew Baker.
\newblock {\em The dollar game on a graph (with 9 surprises)}.
\newblock 2010.
\newblock URL:
  \url{https://drive.google.com/file/d/1RMmfJ_0E6Gy-R6a59mkR_donKzSNHxJq/view}.

\bibitem{baker_norine}
Matthew Baker and Serguei Norine.
\newblock Riemann-{R}och and {A}bel-{J}acobi theory on a finite graph.
\newblock {\em Advances in Mathematics}, 215(2):766--788, 2007.
\newblock URL: \url{https://arxiv.org/abs/math/0608360}.

\bibitem{book}
Caroline Klivans.
\newblock {\em The Mathematics of Chip-firing}.
\newblock CRC Press, 11 2018.
\newblock URL: \url{https://www.dam.brown.edu/people/cklivans/Chip-Firing.pdf}.

\end{thebibliography}

\end{document}